\definecolor{britishracinggreen}{rgb}{0.0, 0.26, 0.15}
\definecolor{darkraspberry}{rgb}{0.53, 0.15, 0.34}
\newtheorem{thm}{Theorem}[section]
\newtheorem{lem}[thm]{Lemma}
\newtheorem{dfn}[thm]{Definition}
\newtheorem{prop}[thm]{Proposition}
\newtheorem{cor}[thm]{Corollary}
\theoremstyle{remark}
\newcommand{\var}{\operatorname{Var}}
\newcommand{\cov}{\operatorname{Cov}}
\providecommand{\Sn}{\mathcal{S}_n}
\providecommand{\expec}{\operatorname{\mathbb{E}}}
\newcommand{\prob}{\operatorname{\mathbb{P}}}
\newcommand{\eps}{\epsilon}
\newcommand{\1}{\mathds{1}}
\newcommand{\oh}{\operatorname{\mathrm{o}}}
\newcommand{\ceil}[1]{\lceil{#1}\rceil}
\newcommand{\argmax}{\operatornamewithlimits{\arg\max}}
\begin{document}

\begin{frontmatter}

\title{Maxima and near-maxima of a Gaussian random assignment field}

\author{Gilles Mordant\corref{cor1}}
\ead{gilles.mordant@uclouvain.be}
	
\author{Johan Segers}
\ead{johan.segers@uclouvain.be}
	
\cortext[cor1]{Correspnding author}
	
\address{LIDAM/ISBA, UCLouvain, Voie du Roman Pays 20, 1348 Louvain-la-Neuve, Belgium}

\begin{abstract}
	The assumption that the elements of the cost matrix in the classical assignment problem are drawn independently from a standard Gaussian distribution motivates the study of a particular Gaussian field indexed by the symmetric permutation group.
	The correlation structure of the field is determined by the Hamming distance between two permutations.
	The expectation of the maximum of the field is shown to go to infinity in the same way as if all variables of the field were independent.
	However, the variance of the maximum is shown to converge to zero at a rate which is slower than under independence, as the variance cannot be smaller than the one of the cost of the average assignment.
	Still, the convergence to zero of the variance means that the maximum possesses a property known as superconcentration.
	Finally, the dimension of the set of near-optimal assignments is shown to converge to zero.
\end{abstract}

\begin{keyword}
Extremal field \sep
Gaussian random field \sep
near maximal set \sep
random assignment \sep 
superconcentration 
\end{keyword}
\end{frontmatter}

\section{Introduction: the Gaussian random assignment field}

Optimal assignment is a classical problem appearing in mathematics and computer sciences that has also attracted probabilists' attention.
Consider the problem of assigning $n$ tasks to $n$ workers such that each task is assigned to one worker and each worker is given a single task.
The cost of matching task~$i$ to worker~$j$ is $c(i, j)$.
An assignment is represented by a permutation $u$ of $[n] = \{1, \ldots, n\}$, where $u(i) = j$ means that task~$i$ is scheduled for worker~$j$.
Let $\Sn$ be the symmetric group of permutations of $[n]$.
The question is which assignment $u \in \Sn$ has lowest total cost $\sum_{i \in [n]} c(i, u(i))$.
In the random assignment problem, the costs $c(i, j)$ are assumed to be drawn from some probability distribution.
For space considerations, we refer to \citet{aldous2004objective}, \citet{krokhmal2009random} or \citet[Section~2.4]{chatterjee2019general} for the rich history of the problem, some of the numerous applications and more comprehensive literature reviews. 

One of the most important results for the random assignment problem was obtained nearly two decades ago by \cite{Aldous}.
He showed that the expected minimal cost for independent unit exponential entries $c(i, j)$ converges to $\zeta(2)=\pi^2/6$ as $n \to \infty$, where $\zeta$ denotes the Riemann zeta function.
This result confirmed a longstanding conjecture by \citet{mezard1987solution} that could be numerically verified for large cost matrices by \citet{pardalos1993expected} thanks to refined interior point methods.
Aldous also proved an asymptotical essential uniqueness property, stating that every near-optimal assignment coincides with the optimal one except for a small proportion of pairs $(i, j)$.
Still for independent unit exponential costs, \cite{wastlund2005variance} showed that the variance of the minimal cost is asymptotically equivalent to $4 \left(  \zeta(2) - \zeta(3) \right) / n$ as $n \to \infty$.
For uniformly distributed costs, \cite{talagrand1995concentration} obtained an upper bound on the fluctuations of the minimal cost, whereas \cite{chatterjee2019general} obtained a lower bound for such fluctuations valid for a whole class of cost distributions on the positive half-line.
The two-way assignment problem we consider here is also qualified as linear, as opposed to multi-dimensional ones studied in \cite{krokhmal+g+p:2007}. In the quadratic version, for instance, the objective is to find an optimal assignment of $n$ jobs to $n$ workers on $n$ machines.

We consider the (linear) random assignment problem where the costs $c(i, j)$ are independent and identically distributed Gaussian random variables.
Up to the best of our knowledge, we are the first to do so.
For convenience, we standardize the problem in such a way that the cost of each assignment is standard Gaussian.
The collection of costs of all possible assignments is a Gaussian random vector indexed by $\Sn$.
We use the term Gaussian field because of the metric it induces on $\Sn$.
This motivates the following definition, which introduces the main object of this paper.

\begin{dfn}
	\label{dfn:graf}
	Let $n$ be a positive integer and let $\left(c(i, j) : i, j \in [n]\right)$ be a random matrix of independent standard Gaussian random variables. The \emph{Gaussian random assignment field} $g_n = \left(g_{n,u} : u \in \Sn\right)$ is defined by
	\[ 
		g_{n,u} = \frac{1}{\sqrt{n}} \sum_{i=1}^n c(i, u(i)). 
	\]
\end{dfn}

The normalization by $1/\sqrt{n}$ in the definition ensures that each variable $g_{n,u}$ is standard Gaussian.
Being a linear transformation of a vector of independent Gaussian random variables, $g_n$ has a multivariate Gaussian distribution, i.e., $g_n$ is a Gaussian random vector.
The correlation matrix $R_n = \left(r_n(u,v)\right)_{u,v \in \Sn}$ is easily seen to be
\begin{equation}
\label{eq:rnuv}
	r_n(u, v)
	= \expec(g_{n,u} g_{n,v}) 
	= \frac{1}{n} \left| \left\{ i \in [n] : u(i) = v(i) \right\} \right|, 
	\qquad u, v \in \Sn.
\end{equation}

The Hamming distance between two permutations $u, v \in \Sn$ is $d_H(u, v) = \left| \left\{ i \in [n] : u(i) \ne v(i) \right\} \right|$. The $L_2$ metric induced by $g_n$ on $\Sn$ is related to the Hamming distance via
\begin{equation}
\label{eq:metric}
	\sqrt{\expec\left( (g_{n,u} - g_{n,v})^2 \right)} 
	= \sqrt{2\left(1 - r_n(u, v)\right)}
	= \sqrt{2d_H(u, v)/n}.
\end{equation}
Two permutations $u$ and $v$ are far apart if the variables $g_{n,u}$ and $g_{n,v}$ are nearly orthogonal, which happens if $u(i)$ and $v(i)$ are different for most $i \in [n]$. This metric structure motivates the use of the term random `field'.

Already since Pierre R\'emond de Montmort's famous 1713 treatise %
\emph{Essay d'analyse sur les jeux de hazard}, it is known that, for any permutation $u \in \Sn$, the proportion of permutations $v \in \Sn$ such that the set $\{i \in [n] : u(i) = v(i)\}$ of fixed points with $u$ has size $k \in \{0,1,\ldots,n\}$ is equal to
\begin{equation}
\label{eq:deM}
	 \frac{1}{k!} \sum_{\ell=0}^{n-k} \frac{(-1)^\ell}{\ell!},
\end{equation}
a number which quickly converges to $e^{-1} / k!$ as $n \to \infty$. It follows that for large $n$, most assigments $g_{n,u}$ and $g_{n,v}$ are nearly uncorrelated---the probability that a unit-Poisson random variable is not larger than $4$ is higher than $99\%$.
Also, the average over all $(n!)^2$ correlations $r_n(u, v)$ is equal to $1/n$; see the proof of Proposition~\ref{thm: expec} below.

Still, the Gaussian random assignment field $g_n$ comprises $n!$ dependent Gaussian variables generated out of only $n^2$ independent ones.
There are thus two conflicting intuitions regarding the dependence within $g_n$.
On the one hand, it is a high-dimensional random vector supported by a relatively low-dimensional subspace.
On the other hand, two arbitrarily chosen components are nearly uncorrelated.
As $n$ grows, each of both effects becomes stronger.
The question is then which of the two tendencies dominates.
The answer may depend on the functional of $g_n$ of interest.

In the context of random assignment problems it is natural to study the lowest and highest achievable costs, i.e., the minimum and maximum over all assignments:
\begin{equation}
\label{eq:Mn}
	W_n = \min_{u \in \Sn} g_{n,u} \qquad \text{and} \qquad 
	M_n = \max_{u \in \Sn} g_{n,u}.
\end{equation}
By symmetry, $W_n$ has the same distribution as $-M_n$. We therefore only study the maximum $M_n$, as is common in the theory of Gaussian fields. Rather than as a cost, we could view $c(i, j)$ as the gain (when positive) or loss (when negative) of assigning `player' $i$ to `game' $j$. The highest possible gain over all assignments is then equal to $M_n$.

Our first objective is to study the asymptotic behavior of the expectation and the variance of $M_n$.
Theorem~\ref{thm:super}, our first main result, states that $\expec(M_n)$ is asymptotically equivalent to $\sqrt{2 \log(n!)}$. This rate is the same as the one for the maximum of $n!$ independent standard Gaussian variables, and is in fact the fastest rate possible for a centered Gaussian field with unit variances \citep[Eq.~(A.3)]{chatterjee2014superconcentration}.
In the terminology of \citet[Chapter~8]{chatterjee2014superconcentration}, this means that the Gaussian random assignment field $g_n$ is \emph{extremal}.
Moreover, by Corollary~\ref{cor:super}, $\var(M_n)$ converges to zero as $n \to \infty$.
Since each individual variable $g_{n,u}$ has unit variance, this means that, according to the same reference, $M_n$ is \emph{superconcentrated}. We refer to \cite{chatterjee2014superconcentration} for an in-depth treatment of the implications of the superconcentration phenomenon. According to Theorem~1.18 in that book, for instance, with high probability there are a large number of assignments $u \in \Sn$ that are mutually distant from each other according to the metric~\eqref{eq:metric} but which are near-maximal, i.e., the relative difference between $g_{n,u}$ and $M_n$ is small.
\cite{tanguy2015} illustrates the superconcentration phenomenon for extrema of stationary Gaussian processes and some other Gaussian models.

Still, the lower bound $\var(M_n) \ge 1/n$ that we derive in Proposition~\ref{thm: expec} implies that the rate at which $\var(M_n)$ converges to zero is slower than the one of the maximum of $n!$ independent standard Gaussian variables, which is $1/\log(n!) \sim 1/(n \log n)$ instead.
The lower bound is also sharper than the bound $\var(M_n) \ge c / \log(n!)$, with $c$ a universal positive constant, that follows from \citet[Corollary~1.9]{ding2015multiple}.
The tool to find our variance lower bound is elementary and works by `sourcing out' the average cost $\bar{g}_n = (n!)^{-1} \sum_{u \in \Sn} g_{n,u}$.

Our second topic of interest is the number of assignments which are nearly optimal. The following definition comes from \cite[Section~8.3]{chatterjee2014superconcentration}.

\begin{dfn}
	\label{dfn:nms}
	The \emph{near maximal set} of a Gaussian field $g = (g_1,\ldots,g_N)$ with $M = \max_{u \in [N]} g_u$ and $m = \expec(M)$ is
	\[
		A(\eps) = \left\{ u \in [N] : g_u > (1 - \eps) m \right\}, 
		\qquad \eps > 0.
	\]
\end{dfn}

The ratio $\log \left| A(\eps) \right| / \log N$ can be thought of as the (fractal) dimension of $A(\eps)$.
Let $A_n(\eps)$ be the near-maximal set of the Gaussian random assignment field $g_n$.
According to our second main result, Theorem~\ref{thm : main}, the expectation of $\log \left| A_n(\eps) \right|$, provided $A_n(\eps)$ is non-empty, is of smaller order than $\log(n!)$.
In particular, the expected dimension of $A_n(\eps)$ when non-empty converges to zero.

The main results are collected in Section~\ref{sec:main} and all proofs in Section~\ref{sec:proofs}.
A discussion in Section~\ref{sec:disc} of open questions concludes the paper.

\section{Main results}
\label{sec:main}

Some of our bounds on the expectation and the variance of the maximum $M_n$ of the Gaussian random assignment field $g_n$ rely on a simple principle. In a Gaussian random vector of which the covariance matrix has constant row sums, it is possible to write all variables as the sum of a common component and an independent Gaussian random vector. `Outsourcing' this common component produces bounds on the expectation and the variance of the maximum.

\begin{lem}
	\label{thm: Outsource}
	Let $g = (g_1,\ldots,g_N)$ be a centered Gaussian random field with $\var(g_u) = 1$ for all $u \in [N]$. Put $M = \max_{u\in[N]} g_u$. Suppose the row means $N^{-1} \sum_{v\in[N]} \cov(g_u, g_v)$ of the covariance matrix are the same for all $u \in [N]$ and let $s^2$ denote their common value. Then $0 \le s^2 \le 1$ and
	\begin{align*}
		\expec(M) &\le \sqrt{2(1-s^2)N},\\
		\var(M) &\ge s^2.
	\end{align*}
\end{lem}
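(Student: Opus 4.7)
The plan is to exploit Gaussian independence. Under the constant-row-sum hypothesis, the average $\bar g = N^{-1} \sum_{v \in [N]} g_v$ turns out to be uncorrelated with each centered residual $h_u = g_u - \bar g$, and therefore---by joint Gaussianity---independent of the entire residual vector $(h_u)_{u\in[N]}$. This lets me split $M = \bar g + \max_u h_u$ into two independent pieces and read off both bounds.

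First I would compute
\begin{equation*}
\cov(\bar g, g_u) = N^{-1} \sum_{v \in [N]} \cov(g_u, g_v) = s^2 \quad \text{for every } u \in [N],
\end{equation*}
and then, averaging once more over $u$, $\var(\bar g) = s^2$. Since $\var(\bar g) \ge 0$ and $\var(h_u) = 1 - 2\cov(\bar g, g_u) + \var(\bar g) = 1 - s^2 \ge 0$, it follows that $s^2 \in [0,1]$. Moreover $\cov(\bar g, h_u) = s^2 - s^2 = 0$, and joint Gaussianity of the linear image $(\bar g, h_1, \ldots, h_N)$ of the Gaussian vector $g$ then upgrades this to genuine independence of $\bar g$ and $(h_u)_{u \in [N]}$.

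With the decomposition $M = \bar g + \max_u h_u$ in hand and independence of the two summands, I obtain
\begin{equation*}
\expec(M) = \expec(\max_u h_u) \quad \text{and} \quad \var(M) = \var(\bar g) + \var(\max_u h_u) \ge s^2,
\end{equation*}
which is the variance lower bound. For the expectation, each $h_u$ is a centered Gaussian with variance $1 - s^2$, so the standard sub-Gaussian maximal inequality $\expec(\max_u h_u) \le \sqrt{2(1-s^2) \log N}$ combined with the crude estimate $\log N \le N$ delivers $\expec(M) \le \sqrt{2(1-s^2) N}$ as claimed.

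I do not anticipate any real obstacle: once the decomposition is set up, both bounds are immediate. The only ingredient worth flagging is the passage from zero covariance to genuine independence, which is free because every quantity in sight is a linear function of $g$ and thus jointly Gaussian.
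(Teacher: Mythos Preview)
Your argument is correct and matches the paper's approach essentially line for line: the paper also decomposes $g_u = \bar g + h_u$, checks $\cov(\bar g, h_u) = 0$ to get independence via joint Gaussianity, and then reads off both bounds from $M = \bar g + \max_u h_u$. One minor remark: the paper's proof in fact stops at the sharper inequality $\expec(M) \le \sqrt{2(1-s^2)\log N}$ (the $N$ in the displayed statement appears to be a typo for $\log N$), so your extra step $\log N \le N$ is not really needed.
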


\begin{prop}
	\label{thm: expec}
	For all integer $n \ge 1$, the maximum $M_n$ in \eqref{eq:Mn} of the Gaussian random assignment field $g_n$ in Definition~\ref{dfn:graf} satisfies
	\begin{align*}
		\expec(M_n) 
		&\le \sqrt{2 \left(1-1/n\right) \log (n!)}, \\
		\var(M_n) 
		&\geq 1/n.
	\end{align*}
\end{prop}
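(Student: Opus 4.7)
The plan is to invoke Lemma~\ref{thm: Outsource} with $N = n!$, so the work reduces to verifying that the row sums of the covariance matrix of $g_n$ are constant and identifying their common value. Since each $g_{n,u}$ has unit variance, the covariance matrix coincides with $R_n$, and what has to be shown is that, for every $u \in \Sn$,
\[
  \frac{1}{n!} \sum_{v \in \Sn} r_n(u,v) \;=\; \frac{1}{n}.
\]

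The key observation is that, by \eqref{eq:rnuv}, the correlation $r_n(u,v)$ depends on the pair $(u,v)$ only through $w = u^{-1} v$: indeed $u(i) = v(i)$ iff $w(i) = i$, so $r_n(u,v) = F(w)/n$, where $F(w)$ denotes the number of fixed points of the permutation $w$. As $v \mapsto u^{-1} v$ is a bijection of $\Sn$, the sum $\sum_{v \in \Sn} r_n(u,v) = \frac{1}{n} \sum_{w \in \Sn} F(w)$ is independent of $u$, which already shows that the row-sum hypothesis of Lemma~\ref{thm: Outsource} is satisfied. The common value is then obtained by a one-line linearity-of-expectation computation: picking $w$ uniformly in $\Sn$,
\[
  \expec[F(w)] \;=\; \sum_{i=1}^n \prob[w(i) = i] \;=\; n \cdot \frac{(n-1)!}{n!} \;=\; 1,
\]
so that the row mean equals $1/n$. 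The same value can alternatively be read off from the de Montmort distribution \eqref{eq:deM}, whose mean is one.

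With $s^2 = 1/n$ and $N = n!$, Lemma~\ref{thm: Outsource} immediately yields $\expec(M_n) \le \sqrt{2(1 - 1/n) \log(n!)}$ and $\var(M_n) \ge 1/n$, which are precisely the two announced bounds. There is no real obstacle here: everything rests on spotting the conjugation symmetry $r_n(u,v) = r_n(e, u^{-1} v)$ that turns the averaging over $v$ into an averaging of fixed-point counts over a uniformly random permutation.
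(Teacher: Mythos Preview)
Your proof is correct and follows essentially the same route as the paper: both reduce to Lemma~\ref{thm: Outsource} by exploiting the group symmetry of $\Sn$ to see that the covariance row sums are constant, then identify $s^2 = 1/n$. The only cosmetic difference is that you compute $s^2$ combinatorially via the expected fixed-point count $\expec[F(w)] = 1$, whereas the paper computes it by writing $\bar{g}_n = n^{-3/2} \sum_{i,j} c(i,j)$ explicitly and reading off $\var(\bar{g}_n) = 1/n$.
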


Since $g_n$ is a centered Gaussian field with unit variances, Eq.~(A.3) in \cite{chatterjee2014superconcentration} implies that $\expec(M_n) \le \sqrt{2 \log(n!)}$. Proposition~\ref{thm: expec} thus constitutes a modest improvement. In the introduction, we already noted that the lower bound for the variance improves the inequality $\var(M_n) \ge c / (n \log n)$ that follows from Corollary~1.9 in \cite{ding2015multiple}.

The question is at what rate $\expec(M_n)$ goes to infinity.
On the one hand, Proposition~\ref{thm: expec} implies that it cannot do so faster than $\sqrt{2 \log(n!)}$.
On the other hand, Theorem~8.3 in \cite{chatterjee2014superconcentration} yields the existence of a universal positive constant $C$ such that
\[
	\expec(M_n) \ge \sqrt{2 \log(n!)} - C \left(
	\log \log (n!) +
	\log \left(
		\sum_{u,v \in \Sn} (n!)^{-2/(1+r_n(u,v))}
	\right)
	\right)^{1/2}
\]
with $r_n(u, v)$ the pairwise correlation in Eq.~\eqref{eq:rnuv}.
A careful analysis of the last sum in the lower bound shows that $\liminf_{n \to \infty} \expec(M_n) / \sqrt{2 \log(n!) } > 0$.
Our first main result, Theorem~\ref{thm:super} states that the limit exists and is equal to one.
	
\begin{thm}
	\label{thm:super}
	Let $M_n$ be the maximum of the Gaussian random assignment field $g_n$. We have
	\begin{equation*}
		\lim_{n \to \infty} \frac{\expec(M_n)}{\sqrt{2 \log(n!)}} = 1.
	\end{equation*}
\end{thm}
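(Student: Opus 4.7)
The upper bound $\limsup_{n\to\infty}\expec(M_n)/\sqrt{2\log(n!)}\le 1$ is immediate from the inequality $\expec(M_n)\le\sqrt{2(1-1/n)\log(n!)}$ of Proposition~\ref{thm: expec}. Only the matching asymptotic lower bound requires work.

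My plan is to invoke Theorem~8.3 of \cite{chatterjee2014superconcentration}, as quoted in the paragraph preceding the theorem: for a universal constant $C>0$,
\[
\expec(M_n)\ge\sqrt{2\log(n!)}-C\Bigl(\log\log(n!)+\log S_n\Bigr)^{1/2},\qquad S_n=\sum_{u,v\in\Sn}(n!)^{-2/(1+r_n(u,v))},
\]
and it then suffices to prove that the bracketed quantity is $o(\log(n!))$. By the left-invariance $r_n(u,v)=r_n(e,uv^{-1})$, with $e$ the identity, the sum rewrites as $S_n=n!\sum_v (n!)^{-2/(1+r_n(e,v))}$; grouping $v$ by its number $k$ of coincidences with $e$ and invoking Montmort's count $\binom{n}{k}D_{n-k}$, where $D_m$ denotes the number of derangements of $m$ letters, gives
\[
S_n=n!\sum_{k=0}^n\binom{n}{k}D_{n-k}(n!)^{-2n/(n+k)}.
\]
Stirling's formula applied termwise with $k=\alpha n$ converts each summand into the form $\exp\bigl(\phi(\alpha)\log(n!)+O(n)\bigr)$ for the explicit function $\phi(\alpha)=-(1+\alpha^2)/(1+\alpha)$.

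The main obstacle, precisely the one alluded to before the theorem, is that the maximum of $\phi$ over $[0,1]$ is strictly negative (equal to $2-2\sqrt 2$, attained at $\alpha^{\ast}=\sqrt 2-1$), so this crude Stirling estimate only yields $\log S_n=(3-2\sqrt 2)\log(n!)+o(\log(n!))$ and hence only the qualitative statement $\liminf>0$. To promote this to $\liminf=1$, I would apply Chatterjee's bound not to the whole field but to a carefully engineered subfield indexed by $T\subset\Sn$ with $\log|T|\sim\log(n!)$ whose correlation profile suppresses the mass near $\alpha^{\ast}$. An attractive candidate is an orbit of a group action with few short cycles (for instance built from the cyclic group generated by an $n$-cycle, or from a Latin square), padded by free choices on a vanishing coordinate block so as to retain full log-cardinality while ensuring that pairs in $T$ are either nearly orthogonal or structurally identical on most coordinates. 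Alternatively, one can iterate the outsourcing decomposition $g_{n,u}=\bar g_n+h_{n,u}$ of Lemma~\ref{thm: Outsource} on the residual field, whose row-sum-zero covariance is more amenable to a decorrelation argument. The design of such a $T$ (or the execution of the bootstrap) will be the technical crux.
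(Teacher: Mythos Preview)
Your upper bound is fine, and your analysis of the double sum $S_n$ is correct: the dominant exponent is $3-2\sqrt{2}$, attained at $\alpha^\ast=\sqrt2-1$, so Chatterjee's Theorem~8.3 applied to the full field only yields
\[
\expec(M_n)\ge \bigl(1-\sqrt{3-2\sqrt2}+o(1)\bigr)\sqrt{2\log(n!)}=(2-\sqrt2+o(1))\sqrt{2\log(n!)},
\]
which is the $\liminf>0$ you state but not what the theorem claims. From that point on your proposal is no longer a proof but a research programme. Neither of the two repairs you sketch is convincing as stated. For the subfield idea, you need $\log|T|\sim\log(n!)$ while simultaneously killing all pairs with overlap near $(\sqrt2-1)n$; the concrete candidates you mention (orbits of an $n$-cycle, Latin squares) have cardinality at most $n!^{o(1)}$ or $n\cdot n!^{o(1)}$, and ``padding by free choices on a vanishing block'' restores the cardinality but also restores the unwanted correlations on the large non-vanishing block. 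For the iterated outsourcing, subtracting $\bar g_n$ removes only a variance of $1/n$ and leaves the pairwise correlations essentially unchanged, so the obstacle at $\alpha^\ast$ persists verbatim. In short, the ``technical crux'' you defer is the entire content of the lower bound, and nothing in the proposal indicates how to carry it out.

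The paper sidesteps this obstacle altogether by an elementary greedy argument that has nothing to do with Chatterjee's second-moment bound. One constructs a random permutation $u_c$ row by row: $u_c(i)$ is the column $j\notin\{u_c(1),\dots,u_c(i-1)\}$ maximising $c(i,j)$. Conditionally on the first $i-1$ rows, $c(i,u_c(i))$ is the maximum of $n-i+1$ independent standard Gaussians, so
\[
\expec(M_n)\ \ge\ \frac{1}{\sqrt n}\sum_{i=1}^{n}\mu_{n-i+1},
\qquad \mu_k=\expec\Bigl[\max_{j\le k}c(1,j)\Bigr].
\]
Since $\mu_k/\sqrt{2\log k}\to1$, the Ces\`aro-type sum on the right is asymptotic to $\sqrt{2n\log n}\sim\sqrt{2\log(n!)}$. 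This gives the matching lower bound directly, with no need to control $S_n$ or to engineer a subfield.
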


Theorem~\ref{thm:super} in combination with \cite[Theorem~8.1]{chatterjee2014superconcentration} implies that $\var(M_n)$ tends to zero and thus that $g_n$ is \emph{superconcentrated}, as discussed in the introduction.

\begin{cor}
	\label{cor:super}
	For $M_n$ and $g_n$ as in Theorem~\ref{thm:super}, we have
	\[
		\lim_{n \to \infty} \var(M_n) = 0.
	\]
\end{cor}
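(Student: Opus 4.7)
The plan is to obtain Corollary~\ref{cor:super} as an essentially immediate consequence of Theorem~\ref{thm:super} combined with the general equivalence between extremality and superconcentration established in \citet[Theorem~8.1]{chatterjee2014superconcentration}.

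First, I would record the setup: the Gaussian random assignment field $g_n$ is a centered Gaussian vector indexed by a finite set of size $N_n = n!$, with $\var(g_{n,u}) = 1$ for every $u \in \Sn$. This places it exactly in the framework of Chapter~8 of \cite{chatterjee2014superconcentration}, with $N_n = n! \to \infty$.

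Next, I would invoke Theorem~\ref{thm:super}, which states that $\expec(M_n) / \sqrt{2 \log(n!)} \to 1$. In the vocabulary of \citet[Chapter~8]{chatterjee2014superconcentration}, this is precisely the statement that the sequence $(g_n)_{n \ge 1}$ is extremal, i.e., the expected maximum attains the fastest rate possible for a centered, unit-variance Gaussian field of its size.

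Finally, I would apply \citet[Theorem~8.1]{chatterjee2014superconcentration}, which guarantees that any extremal centered Gaussian field with unit variances is superconcentrated, meaning that $\var(M_n) \to 0$ as $n \to \infty$. Since the conclusion of Corollary~\ref{cor:super} is exactly this, the proof is complete. There is no real obstacle here: the nontrivial work has already been carried out in establishing Theorem~\ref{thm:super}, and the corollary is a packaging statement that imports a standard general theorem.
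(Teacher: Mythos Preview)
Your proposal is correct and follows exactly the paper's own argument: combine Theorem~\ref{thm:super} with \citet[Theorem~8.1]{chatterjee2014superconcentration} to pass from extremality to superconcentration.
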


Next we study the expected size of the near maximal set $A_n(\eps)$ in Definition~\ref{dfn:nms} of the Gaussian random assignment field $g_n$. Note that the total number of variables is $|\Sn| = n!$ and that $\log(n!)$ is asymptotically equivalent to $n \log(n)$ as $n \to \infty$. According to our second main result, Theorem~\ref{thm : main}, the expected size of $A_n(\eps)$ is substantially smaller. Since $A_n(\eps)$ can be empty with positive probability, we exclude this case from the expectation of its logarithm and we write $\expec[X; A] = \expec[X \1_A]$ for a random variable $X$ and an event $A$ with indicator $\1_A$.

\begin{thm}
	\label{thm : main}
	Let $g_n$ be the Gaussian random assignment field and let $A_n(\eps)$ be its near maximal set. There exist universal constants $C'$ and $C''$ such that
	\[
	\expec\left(\log \left\lvert A_n(\epsilon) \right\rvert ; A_n(\epsilon)\neq \varnothing \right) \le 
	\begin{cases}  
	C' \left(n\log n\right)^{3/4}
	& \text{if $0 < \epsilon \le \left(2 n \log n\right)^{-1/2}$,} \\  
	C'' \sqrt{\epsilon} \left(n\log n\right)
	& \text{if $\left(2 n \log n\right)^{-1/2} < \eps < 1$.}
	\end{cases}
	\]
\end{thm}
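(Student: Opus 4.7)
\emph{Plan.} My strategy is to reduce the expected log-cardinality to a single Gaussian tail probability via Jensen's inequality, and then to invoke a quantitative refinement of Theorem~\ref{thm:super} to make the exponent explicit in $\epsilon$ and $L = \log(n!)$. Conditioning on $\{A_n(\epsilon)\neq\varnothing\}$ and absorbing the conditioning factor through $\prob(E)\log(1/\prob(E))\le e^{-1}$, Jensen's inequality gives
\[
\expec\!\bigl[\log|A_n(\epsilon)|;A_n(\epsilon)\neq\varnothing\bigr] \le \log\!\bigl(1+\expec|A_n(\epsilon)|\bigr)+\Oh(1).
\]
Since each $g_{n,u}$ is $\normal(0,1)$, linearity yields the first-moment $\expec|A_n(\epsilon)| = n!\,\prob\bigl(Z>(1-\epsilon)m_n\bigr)$, which by the Chernoff tail $\prob(Z>t)\le e^{-t^2/2}$ is at most $\exp\bigl(L-\tfrac12(1-\epsilon)^2 m_n^2\bigr)$ with $m_n=\expec(M_n)$.

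To render this bound explicit, I would insert a quantitative lower bound of the form $m_n\ge\sqrt{2L}-E_n$. Using $m_n^2\ge 2L-2\sqrt{2L}\,E_n$ and expanding $(1-\epsilon)^2 = 1 - (2\epsilon-\epsilon^2)$ produces
\[
\log\expec|A_n(\epsilon)| \;\le\; L(2\epsilon-\epsilon^2)+(1-\epsilon)^2\sqrt{2L}\,E_n \;\le\; 2\epsilon L + \sqrt{2L}\,E_n.
\]
Granted $E_n\le CL^{1/4}$, the second summand is $\Oh(L^{3/4})$, and the two regimes of the theorem follow by comparing the two summands at the threshold $\epsilon_0=(2L)^{-1/2}$: for $\epsilon\le\epsilon_0$ we have $2\epsilon L\le\sqrt{2L}\le C'L^{3/4}$, yielding a bound of order $L^{3/4}\asymp(n\log n)^{3/4}$; for $\epsilon>\epsilon_0$ we have both $L^{3/4}\le\sqrt{\epsilon}\,L$ (by definition of $\epsilon_0$) and $2\epsilon L\le 2\sqrt{\epsilon}\,L$ (since $\epsilon<1$), yielding $\Oh(\sqrt{\epsilon}\,n\log n)$.

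The main obstacle is therefore producing the rate $E_n\le CL^{1/4}$, i.e., $m_n\ge\sqrt{2L}-CL^{1/4}$. Chatterjee's Theorem~8.3, as quoted in the excerpt, controls $E_n$ by $C(\log\log L+\log S_n)^{1/2}$ in terms of the correlation sum $S_n=\sum_{u,v\in\Sn}(n!)^{-2/(1+r_n(u,v))}$, so the substantive work is a sharp estimate of $\log S_n$. Using de~Montmort's formula \eqref{eq:deM} to rewrite $S_n$ as $n!\sum_{k=0}^n D(n,k)(n!)^{-2n/(n+k)}$, one performs a saddle-point analysis on $k$: the dominant contribution comes from an intermediate value of $k$ whose careful localization, together with the tight derangement asymptotics $D_m\sim m!/e$, must yield $\log S_n\lesssim\sqrt{L}$. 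This is precisely the same ingredient that drives the proof of Theorem~\ref{thm:super}; once it is available the rest of the argument is routine Jensen and Chernoff bookkeeping.
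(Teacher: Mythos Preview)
Your first-moment/Jensen reduction is correct, but the approach collapses at the step where you need $E_n := \sqrt{2L} - m_n \le C L^{1/4}$ with $L=\log(n!)$. This quantitative rate is not proved anywhere in the paper, and the route you sketch to obtain it is in fact blocked: the correlation sum $S_n = \sum_{u,v}(n!)^{-2/(1+r_n(u,v))}$ satisfies $\log S_n \asymp L$, not $\log S_n \lesssim \sqrt{L}$. Using de~Montmort's formula~\eqref{eq:deM} and writing $k = \lambda n$, the exponent governing the $k$th term is
\[
2 - \frac{2}{1+k/n} - \frac{\log(k!)}{L} \;\sim\; 2 - \frac{2}{1+\lambda} - \lambda,
\]
whose maximum on $[0,1]$, attained at $\lambda = \sqrt{2}-1$, equals $(\sqrt{2}-1)^2 \approx 0.172$. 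Hence $\log S_n \sim (\sqrt{2}-1)^2 L$, and Chatterjee's Theorem~8.3 only yields $E_n \le C\sqrt{L}$, giving a first-moment bound of order $\sqrt{2L}\cdot\sqrt{L} \asymp L$, which is useless. Nor does the greedy construction in the proof of Theorem~\ref{thm:super} help: it gives $E_n$ of order at best $\sqrt{n/\log n}$, still far larger than $L^{1/4}$. Your claim that ``this is precisely the same ingredient that drives the proof of Theorem~\ref{thm:super}'' is also incorrect: that proof uses the greedy assignment, not the correlation sum.

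The paper avoids this obstacle entirely by invoking a different result of Chatterjee (his Theorem~12.4, stated here as Theorem~\ref{thm:chatterjee}), which bounds $\expec[\log|A(\eps)|;A(\eps)\ne\varnothing]$ directly by
\[
\inf_{\delta\in(0,1)}\Bigl(\log V(\delta) + C\,\frac{\max\{\eps m^2, m\}}{\delta}\Bigr),
\]
where $V(\delta)$ is the maximal size of a correlation ball of radius $\delta$. Crucially, this requires only the \emph{upper} bound $m_n \le \sqrt{2n\log n}$. The combinatorial input is then the estimate $V_n(\delta)\le n^{\delta n}$, after which choosing $\delta = (n\log n)^{-1/4}$ or $\delta = \sqrt{\eps}$ in the two regimes gives the stated bounds.
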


It follows from Theorem~\ref{thm : main} that the expected dimension $\log \left| A_n(\eps) \right| / \log(n!)$ of $A_n(\eps)$ when non-empty converges to zero for $\eps = \eps_n \to 0$.

\section{Proofs}
\label{sec:proofs}

\begin{lem}
	\label{lem: Outsource}
	Let $g = (g_1,\ldots,g_N)$ be a centred $N$-dimensional Gaussian random vector with covariance matrix $\Gamma = (\gamma_{uv})_{u,v\in[N]}$ such that the row sums $\sum_{v\in[N]} \gamma_{uv}$ do not depend on $u \in [N]$. Let $s^2 = N^{-1} \sum_{v\in[N]} \gamma_{uv}$ denote the common value of the row means. Then $0 \le s^2 \le \min_{u \in [N]} \gamma_{uu}$ and we have the decomposition
	\begin{equation} 
	\label{eq:gudecomp}
		g_u = \bar{g} + h_u, \qquad u \in [N],
	\end{equation}
	where $\bar{g} = N^{-1} \sum_{u \in N} g_u$ is a centred Gaussian random variable with variance $s^2$ and $h = (h_1,\ldots,h_N)$ is a centred Gaussian random vector with $\var(h_u) = \gamma_{uu} - s^2$, with $\bar{g}$ and $h$ being independent.
\end{lem}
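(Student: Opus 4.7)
The plan is to verify the decomposition~\eqref{eq:gudecomp} by explicit calculation and to exploit the fact that, within a jointly Gaussian family, uncorrelatedness implies independence.

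First, I would compute the variance of the empirical mean $\bar g = N^{-1} \sum_{u} g_u$. By linearity and the row-sum hypothesis,
\[
    \var(\bar g) = \frac{1}{N^2} \sum_{u,v \in [N]} \gamma_{uv} = \frac{1}{N^2} \sum_{u \in [N]} N s^2 = s^2.
\]
Next, for any fixed $u \in [N]$, the same hypothesis yields
\[
    \cov(g_u, \bar g) = \frac{1}{N} \sum_{v \in [N]} \gamma_{uv} = s^2.
\]
This already shows $0 \le s^2$ (it is a variance).

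Next, I would define $h_u := g_u - \bar g$ and check that all advertised properties hold. The random vector $(\bar g, h_1, \ldots, h_N)$ is a linear image of the Gaussian vector $g$, hence jointly Gaussian and centred. For each $u$,
\[
    \cov(h_u, \bar g) = \cov(g_u, \bar g) - \var(\bar g) = s^2 - s^2 = 0,
\]
so $\bar g$ is uncorrelated with every component of $h$. Joint Gaussianity then upgrades this to independence of $\bar g$ and the vector $h$. Computing the variance of $h_u$ gives
\[
    \var(h_u) = \var(g_u) - 2 \cov(g_u, \bar g) + \var(\bar g) = \gamma_{uu} - 2 s^2 + s^2 = \gamma_{uu} - s^2.
\]
Since $\var(h_u) \ge 0$ for every $u$, we obtain $s^2 \le \gamma_{uu}$ for all $u \in [N]$ and hence $s^2 \le \min_{u \in [N]} \gamma_{uu}$, completing the bounds on $s^2$.

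The argument is entirely routine; there is no real obstacle beyond bookkeeping. The only point that might trip a hasty reader is the passage from uncorrelatedness of $\bar g$ with each $h_u$ to independence of $\bar g$ from the whole vector $h$. I would therefore make it explicit that $(\bar g, h)$ is jointly Gaussian as a linear transform of $g$, so that a vanishing cross-covariance implies independence block-wise.
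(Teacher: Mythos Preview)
Your proposal is correct and follows essentially the same route as the paper: compute $\var(\bar g)=s^2$ and $\cov(g_u,\bar g)=s^2$ from the constant-row-sum hypothesis, deduce that $h_u=g_u-\bar g$ is uncorrelated with $\bar g$, invoke joint Gaussianity to obtain independence, and read off $\var(h_u)=\gamma_{uu}-s^2\ge 0$ for the upper bound on $s^2$. The only cosmetic difference is that the paper derives $\var(h_u)$ from the orthogonal decomposition $\var(g_u)=\var(\bar g)+\var(h_u)$ rather than by expanding the bilinear form, which is equivalent.
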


\begin{proof}
	The covariance between each $g_u$ and the arithmetic mean $\bar{g}$ is
	\[
		\cov(g_u, \bar{g}) = \frac{1}{N} \sum_{v \in [N]} \cov(g_u, g_v) = s^2, \qquad u \in [N].
	\]
	This common covariance is also equal to the variance of $\bar{g}$, since 
	\[ 
		\var( \bar{g} ) = \frac{1}{N} \sum_{u \in [N]} \cov(g_u, \bar{g}) = s^2.
	\]
	As a consequence, $s^2$ is non-negative and the variables $\bar{g}$ and $g_u - \bar{g}$ are uncorrelated:
	\[
		\cov\left( \bar{g}, g_u - \bar{g} \right) = s^2 - s^2 = 0.
	\]
	It follows that for all $u \in [N]$ we have
	\[
		\gamma_{uu} = \var(g_u) = \var(\bar{g}) + \var(g_u - \bar{g}) = s^2 + \var(g_u - \bar{g})
	\]
	and thus
	\[
		\gamma_{uu} - s^2 = \var(g_u - \bar{g}) \ge 0.
	\]
	Define $h = (h_1,\ldots,h_N)$ by $h_u = g_u - \bar{g}$ for $u \in [N]$.
	Since the $(N+1)$-dimensional random vector $(\bar{g}, h)$ is jointly Gaussian, the fact that $\bar{g}$ is uncorrelated with every $h_u$ implies that $\bar{g}$ and $h$ are independent.
\end{proof}

\begin{proof}[Proof of Lemma~\ref{thm: Outsource}]
	We apply Lemma~\ref{lem: Outsource}. Since all variables $g_u$ have unit variance, it follows that $0 \le s^2 \le 1$. Let $\bar{g}$ and $h$ be as in Lemma~\ref{lem: Outsource}. In view of the decomposition~\eqref{eq:gudecomp}, the maximum $M$ over the Gaussian field $g$ is equal to
	\[
		M = \bar{g} + L \qquad \text{where} \qquad L = \max_{u \in [n]} h_u.
	\]
	The variables $\bar{g}$ and $L$ are independent, whence 
	\[ 
		\var(M) = \var(\bar{g}) + \var(L) \ge \var(\bar{g}) = s^2. 
	\]
	Furthermore, 
	\[ 
		\expec(M) = \expec(\bar{g}) + \expec(L) = \expec(L). 
	\]
	If $s^2 = 1$, then $h$ is degenerate at the origin and $L = 0$ almost surely. If $s^2 < 1$, then we can write $h = \sqrt{1 - s^2} \tilde{h}$ where $\tilde{h} = (\tilde{h}_1,\ldots,\tilde{h}_N)$ is a centred Gaussian field with $\var(\tilde{h}_u) = 1$ for all $u \in [N]$. We have $L = \sqrt{1 - s^2} \tilde{L}$ with $\tilde{L} = \max_{u \in [N]} \tilde{h}_u$. By Eq.~(A.3) in  \cite{chatterjee2014superconcentration}, we have $\expec(\tilde{L}) \le \sqrt{2 \log(N)}$. We conclude that 
	\[ 
		\expec(M) = \sqrt{1 - s^2} \expec(\tilde{L}) \le \sqrt{2(1-s^2) \log(N)}.
		\qedhere 
	\]
\end{proof}

\begin{proof}[Proof of Proposition~\ref{thm: expec}]
	We apply Lemma~\ref{thm: Outsource}.
	By permutation symmetry, the covariance matrix of $g_n$ has constant row sums: for $u, v, w \in \Sn$, we have $\cov(g_{u,n}, g_{v,n}) = \cov(g_{w \circ u, n}, g_{w \circ v, n})$, and thus, for $u_1, u_2 \in \Sn$,
	\[
		\sum_{v \in \Sn} \cov(g_{u_1,n}, g_{v,n})
		= \sum_{v \in \Sn} \cov(g_{u_2,n}, g_{u_2 \circ u_1^{-1} \circ v, n})
		= \sum_{v'\in \Sn} \cov(g_{u_2,n}, g_{v', n}).
	\]
	The common value $s^2$ of the row means of the covariance matrix of $g_n$ is equal to the variance of the average $\bar{g}_n = (n!)^{-1} \sum_{u \in \Sn} g_u$ over all assignments.
	But the latter is proportional to the average over all individual costs:
	\begin{align*}
		\bar{g}_n 
		&= \frac{1}{n!} \sum_{u \in \Sn} \frac{1}{\sqrt{n}} \sum_{i \in [n]} c(i, u(i)) \\
		&= \frac{1}{n! \sqrt{n}} \sum_{i \in [n]} \sum_{u \in \Sn} c(i, u(i)) \\
		&= \frac{1}{n! \sqrt{n}} \sum_{i \in [n]} \sum_{j \in [n]} c(i, j) 
		\underbrace{\left| \left\{ u \in \Sn : u(i) = j \right\} \right|}_{=(n-1)!} \\
		&= \frac{1}{n\sqrt{n}} \sum_{i \in [n]} \sum_{j \in [n]} c(i, j).
	\end{align*}
	It follows that the common value of the row means of the covariance matrix of $g_n$ is
	\[
		s^2 
		= \var(\bar{g}_n) 
		= \frac{1}{n^3} \cdot n^2 \cdot 1 
		= \frac{1}{n}.
	\]
	Lemma~\ref{thm: Outsource} yields the inequalities $\expec(M_n) \le \sqrt{2(1 - n^{-1})\log(n!)}$ and $\var(M_n) \ge n^{-1}$ for all positive integer $n$.
\end{proof}

\begin{proof}[Proof of Theorem~\ref{thm:super}]
	In view of Theorem~\ref{thm: expec}, it suffices to show that
	\begin{equation}
	\label{eq:EMn:liminf}
	\liminf_{n \to \infty} \frac{\expec(M_n)}{\sqrt{2 \log(n!)}} \ge 1.
	\end{equation}	
	To do so, we construct a lower bound to $M_n$ by a greedy approach.
	Define a random permutation $u_c \in \Sn$ recursively by passing through the rows $i$ of the cost matrix $c$ one after the other, each time discarding the columns $j$ that have already been selected in the previous steps:
	\begin{itemize}
		\item
		Row $i = 1$: let $u_c(1)$ be the index $j \in [n]$ that maximizes $c(1, j)$.
		\item
		Row $i = 2$: let $u_c(2)$ be the index $j \in [n] \setminus \{u_c(1)\}$ that maximizes $c(2, j)$.
		\item
		$\ldots$
		\item
		Row $i = n$: let $u_c(n)$ be the only remaining element in $[n] \setminus \{u_c(1),\ldots,u_c(n-1)\}$.
	\end{itemize}
	Formally, define $u_c \in \Sn$ by
	\begin{align*}
	u_c(1) &= \argmax_{j \in [n]} c(1, j), \\
	u_c(i) &= \argmax_{j \in [n] \setminus \{u_c(1), \ldots, u_c(i-1)\}} c(i, j),
	\qquad i \in \{2, \ldots, n\}.
	\end{align*}
	In case of ties, always choose the smallest index available; another tie-splitting rule would work too.
	Clearly,
	\begin{align*}
	c(1, u_c(1)) &= \max_{j \in [n]} c(1, j), \\
	c(i, u_c(i)) &= \max_{j \in [n] \setminus \{u_c(1),\ldots,u_c(i-1)\}} c(i, j),
	\qquad i \in \{2,\ldots,n\}.
	\end{align*}
	Then $c(1, u_c(1))$ is the maximum of $n$ independent standard Gaussian variables; $c(2, u_c(2))$ is the maximum of $(n-1)$ independent standard Gaussian variables; etc.
	Indeed, let $i \ge 2$. Since the indices $u_c(1),\ldots,u_c(i-1)$ are a function of the first $(i-1)$ rows of the cost matrix $c$, the conditional distribution of the maximum $c(i, u_c(i))$ over the remaining $n-i+1$ variables $c(i,j)$ with indices $j$ in $[n] \setminus \{u_c(1),\ldots,u_c(i-1)\}$ in the $i$th row given the earlier chosen indices $u_c(1),\ldots,u_c(i-1)$ is independent of the values of those indices.
	
	The random assignment $u_c$ constructed by this greedy algorithm yields a lower bound to the maximum:
	\[
	M_n \ge \frac{1}{\sqrt{n}} \sum_{i=1}^n c(i, u_c(i)).
	\]
	Let $\mu_n$ be the expectation of the maximum of $n$ independent standard Gaussian variables, i.e., $\mu_n = \expec[\max_{j \in [n]} c(1, j)]$.
	It follows that
	\[
	\expec(M_n) \ge \frac{1}{\sqrt{n}} \sum_{i \in [n]} \expec[c(i, u_c(i))]
	= \frac{1}{\sqrt{n}} \sum_{i \in [n]} \mu_{n-i+1}.
	\]
	From classical extreme value theory, it is known that\footnote{In fact, more accurate expansions are available. In \cite{resnick1987}, for instance, combine Example~2 on page~71 with Proposition~2.1(iii) on page~77 and Exercise~2.1.2 on page~84.}
	\begin{equation}
	\label{eq:mun}
	\lim_{n \to \infty} \frac{\mu_n}{\sqrt{2 \log n}} = 1.
	\end{equation}
	Let $\delta \in (0, 1)$. Since $\mu_n$ is non-decreasing in $n$, we find
	\begin{equation*}
	\expec(M_n) 
	\ge \frac{1}{\sqrt{n}} \sum_{i \in [n]} \mu_{i} 
	\ge \frac{1}{\sqrt{n}} \sum_{i \in [n], i > \delta n} \mu_{i} \\
	\ge \frac{1}{\sqrt{n}} \left(n - \ceil{\delta n} + 1\right) \mu_{\ceil{\delta n}},
	\end{equation*}
	where $\ceil{x}$ is the smallest integer not less than the scalar $x$. 
	By~\eqref{eq:mun}, we have
	\begin{align*}
	\frac{\mu_{\ceil{\delta n}}}{\sqrt{2 \log n}}
	= \frac{\mu_{\ceil{\delta n}}}{\sqrt{2 \log \ceil{\delta n}}}
	\sqrt{\frac{\log \ceil{\delta n}}{\log n}}
	\to 1, \qquad n \to \infty.
	\end{align*}
	We arrive at
	\[
	\frac{\expec(M_n)}{\sqrt{2 n \log n}}
	\ge 
	\frac{n - \ceil{\delta n} + 1}{n} 
	\frac{\mu_{\ceil{\delta n}}}{\sqrt{2 \log n}}
	\to 1-\delta, \qquad n \to \infty.
	\]
	As $\delta \in (0, 1)$ was arbitrary, we find
	\[
	\liminf_{n \to \infty} \frac{\expec(M_n)}{\sqrt{2 n \log n}} \ge 1.
	\]
	Since $\log(n!) / \left(n \log n\right) \to 1$ as $n \to \infty$, the claim~\eqref{eq:EMn:liminf} follows.
\end{proof}

\begin{proof}[Proof of Corollary~\ref{cor:super}]
	Combine Theorem~\ref{thm:super} with Theorem~8.1 in \cite{chatterjee2014superconcentration}, stating an upper bound for $\var(M_n)$ which tends to zero since $\expec(M_n) / \sqrt{2 \log(n!)}$ tends to one.
\end{proof}

As a preparation to the proof of Theorem~\ref{thm : main}, we state Theorem~12.4 in \cite{chatterjee2014superconcentration}.
The theorem gives a bound on the size of the near maximal set $A(\eps)$ in Definition~\ref{dfn:nms} in terms of two ingredients: 
the expectation $m$ of the maximum and the number $V(\delta)$ of variables that have a correlation larger than $1-\delta$ with a given variable.

\begin{thm}[Chatterjee]
	\label{thm:chatterjee}
	Let $g = (g_1,\ldots,g_N)$ be a centered Gaussian random vector such that $\var(g_u) = 1$ for all $u \in [N]$. Let $M$, $m$ and $A(\eps)$ be as in Definition~\ref{dfn:nms}. For $u \in [N]$ and $\delta \in (0, 1)$, define
	\begin{equation*} 
		B(u, \delta) = \left\{ v \in [N] : \cov(g_u, g_v) > 1 - \delta \right\}
		\qquad \text{and} \qquad
		V(\delta) = \max_{u \in [N]} \left| B(u, \delta) \right|
	\end{equation*}
	Then there exists a universal constant $C$ such that for any $\eps \in (0, 1)$ we have the bound
	\[
		\expec\left( \log \left| A(\eps) \right|; A(\eps) \ne 0 \right)
		\le
		\inf_{\delta \in (0, 1)} \left(
			\log V(\delta) + \frac{C \max \{ \eps m^2, m \}}{\delta}
		\right).
	\]
\end{thm}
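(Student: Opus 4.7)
The plan is to invoke Chatterjee's Theorem~\ref{thm:chatterjee} with $N = n!$; the two ingredients needed are an upper bound on $m = \expec(M_n)$ and on the correlation-neighborhood count $V(\delta)$. For $m$, Proposition~\ref{thm: expec} already supplies the non-asymptotic bound $m \le \sqrt{2 \log(n!)} \le \sqrt{2 n \log n}$, so that $m^2 = \Oh(n \log n)$; the sharper equivalence from Theorem~\ref{thm:super} is not required, since Chatterjee's bound is monotone in~$m$.

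The main work is to estimate $V(\delta)$. By the group-invariance of $r_n$ in \eqref{eq:rnuv}, $|B(u,\delta)|$ is the same for every $u \in \Sn$, so $V(\delta)$ reduces to counting permutations $v$ that agree with the identity on more than $n(1-\delta)$ positions. Stratifying by the number $j$ of non-fixed points of $v$ and using that the number $D_j$ of derangements is at most $j!$, this count is controlled by $\sum_{j=0}^{K} n!/(n-j)!$ with $K = \lfloor n \delta \rfloor$, which in turn yields the elementary bound $\log V(\delta) = \Oh(n \delta \log n)$ in the regime $K \ge 1$ where the optimum in $\delta$ will land.

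Feeding these two estimates into Chatterjee's inequality reduces the theorem to the scalar optimization
\[
	\inf_{\delta \in (0,1)} \left( A \delta + \frac{B}{\delta} \right),
	\qquad A \asymp n \log n, \quad B \asymp \max\{\eps m^2, m\},
\]
whose minimum is $2\sqrt{AB}$, attained at $\delta^\ast = \sqrt{B/A}$. I would then split according to whether $\eps m^2 \le m$, which with $m$ of order $\sqrt{n \log n}$ corresponds exactly to the cut-off $\eps \asymp (n \log n)^{-1/2}$ in the statement. Evaluating $\sqrt{AB}$ on each side gives, up to universal constants, the two rates $(n \log n)^{3/4}$ and $\sqrt{\eps}\,(n \log n)$.

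The main obstacle I anticipate is calibrating the combinatorial bound on $V(\delta)$: it has to be simultaneously tight enough to reproduce the exponents $3/4$ and $1/2$ after the balancing, and explicit enough that the optimizer $\delta^\ast$ can be verified to satisfy $K = \lfloor n \delta^\ast \rfloor \ge 1$ in both regimes (so that the derangement estimate is actually applicable). A small boundary check at $\eps = (2 n \log n)^{-1/2}$, where the two branches must agree in order so that universal constants $C'$ and $C''$ can be chosen consistently, then completes the argument.
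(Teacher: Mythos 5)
Your proposal does not prove the assigned statement. Theorem~\ref{thm:chatterjee} is Chatterjee's general structural result: for an \emph{arbitrary} centered Gaussian vector with unit variances, the expected log-size of the near-maximal set is controlled by $\log V(\delta)$ plus $C\max\{\eps m^2, m\}/\delta$. Your argument opens by \emph{invoking} this very theorem with $N = n!$ and then carries out the application to the Gaussian random assignment field --- that is, you have sketched a proof of Theorem~\ref{thm : main}, not of Theorem~\ref{thm:chatterjee}. As a proof of the assigned statement this is circular: the inequality to be established is assumed at the outset, and nothing in your outline touches its actual content, namely the comparison between the level set $A(\eps)$ and the correlation neighbourhoods $B(u,\delta)$, which in \cite{chatterjee2014superconcentration} rests on the Gaussian concentration and hypercontractivity machinery of Chapter~12. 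The paper itself does not reprove this result either --- it states it with attribution and cites Theorem~12.4 of that monograph --- so the honest resolution is to import it explicitly rather than to present the downstream computation as its proof.

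For completeness: read as a proof of Theorem~\ref{thm : main}, your outline does track the paper's argument --- permutation invariance makes $\left|B_n(u,\delta)\right|$ independent of $u$; counting permutations with more than $(1-\delta)n$ fixed points gives $V_n(\delta) \le n!/\ceil{(1-\delta)n}! \le n^{\delta n}$, hence $\log V_n(\delta) \le \delta n \log n$; the bound $m_n \le \sqrt{2 n \log n}$ comes from Proposition~\ref{thm: expec}; and the case split at $\eps \asymp (n\log n)^{-1/2}$ with the two rates $(n\log n)^{3/4}$ and $\sqrt{\eps}\, n \log n$ falls out of balancing $\delta$. The paper sidesteps your worry about the optimizer $\delta^\ast$ by not optimizing exactly: it simply substitutes $\delta = (n\log n)^{-1/4}$ in the first regime and $\delta = \sqrt{\eps}$ in the second, each of which makes both terms of the required order. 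But none of this bears on the statement you were actually asked to prove.
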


\begin{proof}[Proof of Theorem~\ref{thm : main}]
	We apply Theorem~12.4 in \cite{chatterjee2014superconcentration}, stated for the reader's convenience as Theorem~\ref{thm:chatterjee} above, to the Gaussian random assignment field $g_n$.
	Recall the discussion around Eq.~\eqref{eq:deM}. For $\delta \in (0, 1)$, the size of the set 
	\[ 
		B_n(u, \delta) 
		= \left\{ v \in \Sn : \cov(g_u, g_v) \ge 1 - \delta \right\} 
		= \left\{ v \in \Sn : \left| \left\{ i \in [n] : u(i) = v(i) \right\} \right| > (1-\delta)n \right\}
	\] 
	does not depend on $u \in \Sn$ and is equal to
	\[
		V_n(\delta) = \left| B_n(u, \delta) \right|
		= n! \sum_{k \in [n], k > (1-\delta) n} \frac{1}{k!} \sum_{\ell = 0}^{n-k} \frac{(-1)^\ell}{\ell!}.
	\]
	Let $\ceil{x}$ denote the smallest integer not smaller than the scalar $x$. 
	Then $V_n(\delta)$ is bounded by
	\[
		V_n(\delta)
		\le \frac{n!}{\ceil{(1-\delta)n}!} \sum_{k \in [n]} \frac{1}{k!} \sum_{\ell = 0}^{n-k} \frac{(-1)^\ell}{\ell!}
		\le \frac{n!}{\ceil{(1-\delta)n}!}
		\le n^{\delta n}.
	\]
	Indeed, the third inequality follows from the fact that the number of integers $k$ such that $\ceil{(1-\delta)n} < k \le n$ is bounded by $n - (1-\delta)n = \delta n$, while the second inequality follows from
	\begin{align*}
		\sum_{k \in [n]} \frac{1}{k!} \sum_{\ell = 0}^{n-k} \frac{(-1)^\ell}{\ell!}
		&= \sum_{k \in [n]} \sum_{\ell = 0}^{n-k}
		\frac{1}{(k+\ell)!} \binom{k+\ell}{\ell} (-1)^\ell \\
		&= \sum_{s \in [n]} \frac{1}{s!} \sum_{\ell = 0}^{s-1} \binom{s}{\ell} (-1)^\ell \\
		&= \sum_{s \in [n]} \frac{1}{s!} \bigl( (1-1)^s - (-1)^s \bigr) 
		= \sum_{s \in [n]} \frac{1}{s!} (-1)^{s-1} 
		\le 1,
	\end{align*}
	in view of the binomial theorem.
	By Theorem~\ref{thm:chatterjee} and the bound on $V_n(\delta)$, it follows that
	\[
		\expec\left( \log A_n(\eps); A_n(\eps) \ne \varnothing \right)
		\le
		\inf_{\delta \in (0, 1)} \left(
			\delta n \log(n) + \frac{C \max \left\{\eps m_n^2, m_n\right\}}{\delta}
		\right)
	\]
	where $m_n = \expec(M_n)$. Since $m_n \le \sqrt{2 \log(n!)} \le \sqrt{2 n \log n}$ by Theorem~\ref{thm: expec}, we obtain
	\[
		\expec\left( \log A_n(\eps); A_n(\eps) \ne \varnothing \right)
		\le
		\inf_{\delta \in (0, 1)} \left(
			\delta n \log(n) + 
			\frac{C \max\left\{ 2 \eps n \log n, \sqrt{2 n \log n}\right\}}{\delta}
		\right).
	\]
	It remains to distinguish between the following two cases:
	\begin{enumerate}[(i)]
	\item 
		If $0 < \eps \le 1 / \sqrt{2 n \log n}$, then put $\delta = \left(n \log n\right)^{-1/4}$. Both terms in the sum are of the order $\left(n \log n\right)^{3/4}$.
	\item
		If $1 / \sqrt{2 n \log n} < \eps < 1$, then put $\delta = \sqrt{\eps}$. Both terms in the sum are of the order $\sqrt{\eps} n \log n$.
	\end{enumerate}
	The claim follows.
\end{proof}

\section{Discussion}
\label{sec:disc}

Proposition~\ref{thm: expec} in combination with Corollary~\ref{cor:super} yields $1/n \le \var(M_n) \to 0$ as $n \to \infty$.
The correct rate at which $\var(M_n)$ converges to zero remains an open problem.
For unit exponential costs, \cite{wastlund2005variance} showed that the variance of the minimal costs is asymptotically equivalent to a constant times $1/n$ as $n \to \infty$.
It is however unclear if this also occurs for the Gaussian random assignment field.

In Lemma~\ref{thm: Outsource}, we have shown that the variance of the maximum of a Gaussian array could be bounded from below by the variance of the sample mean under the fixed-row-sum assumption on the covariance matrix. What are (easily verifiable) necessary and sufficient conditions on the covariance matrix for this lower bound to provide the correct order of the variance of the maximum?

Finally, limit theorems for the maxima of Gaussian random variables are well-covered in the literature---see for instance \cite{Leadbetter} and the references therein---in particular in the weakly dependent case and for stationary sequences. 
In the latter case, assuming some regularity on the correlation function, \citet{mittal1975limit} proved that Gumbel limits, mixtures of Gumbel and Gaussian distributions and Gaussian distributions could arise depending on the speed at which correlations tend to zero. 
The maximum of the Gaussian random assignment field possesses characteristics both of a maximum (over all possible assignments) and of a sum (over all jobs). How does the limit distribution look like? Although the setting is different, the answer could give more insight into the limit distribution of the empirical Wasserstein distance \citep{panaretos+z:2019}, a much coveted result, which corresponds to a matching problem with a matrix of random but dependent costs.

\section*{Acknowledgments}

The first author thanks Sourav Chatterjee for some insightful remarks. The comments by Stefka Kirilova and Vincent Plassier are also gratefully acknowledged. The authors would also like to thank the Associate Editor and an anonymous Referee for comments that helped improve the presentation of the paper and for pointing out particularly relevant references. The second author gratefully acknowledges funding by FNRS-F.R.S.\ grant CDR~J.0146.19.

\bibliography{Near_Maximal_sets.bib}

\end{document}